\newcommand{\beq}{\begin{equation}}
\newcommand{\eeq}{\end{equation}}
\newcommand{\beqr}{\begin{equation}\begin{array}{l}}
\newcommand{\eeqr}{\end{array}\end{equation}}
\newcommand{\beqa}{\begin{eqnarray}}
\newcommand{\eeqa}{\end{eqnarray}}
\newtheorem{theorem}{Theorem}[section]
\def\R{\mathbb{R}}
\def\F{\mathcal{F}}
\def\P{\mathsf{P}}
\def\ds{\mathrm{d}s}
\def\dX{\mathrm{d}X}
\def\dY{\mathrm{d}Y}
\def\dW{\mathrm{d}W}
\def\Q{\mathsf{Q}}
\def\pR{\mathsf{R}}
\def\fP{{\protect \overrightarrow{\mathsf{P}}}}
\def\eps{\varepsilon}
\def\argmin{\operatornamewithlimits{arg\,min}}
\def\tr{\operatorname{tr}}
\def\wh{\widehat}
\def\wt{\widetilde}
\def\E{\mathbf{E}}
\long\def\IGNORE#1{}
\title{\LARGE \bf 
Forward-Backward Rapidly-Exploring Random Trees for\\ Stochastic Optimal Control
} 
\author{Kelsey P. Hawkins, Ali Pakniyat, Evangelos Theodorou, Panagiotis Tsiotras
\thanks{All authors are with the Georgia Institute of Technology, Atlanta, Georgia 30332--0250.
        Contact at {\tt\small kphawkins, pakniyat, evangelos.theodorou, tsiotras @gatech.edu}}%
\thanks{Support for this work has been provided by NSF award IIS-2008686.}%
}
\begin{document}

\maketitle
\thispagestyle{empty}
\pagestyle{empty}

\begin{abstract}
    We propose a numerical method for the computation of the forward-backward stochastic differential equations \mbox{(FBSDE)} appearing in the Feynman-Kac representation of the value function in stochastic optimal control problems. By the use of the Girsanov change of probability measures, it is demonstrated how a rapidly-exploring random tree (RRT) method can be utilized for the forward integration pass, as long as the controlled drift terms are appropriately compensated in the backward integration pass. Subsequently, a numerical approximation of the value function is proposed by solving a series of function approximation problems backwards in time along the edges of the constructed RRT. Moreover, a local entropy-weighted least squares Monte Carlo (LSMC) method is developed to concentrate function approximation accuracy in regions most likely to be visited by optimally controlled trajectories. The results of the proposed methodology are demonstrated on linear and nonlinear stochastic optimal control problems with non-quadratic running
    costs, which reveal significant convergence improvements over previous FBSDE-based numerical solution methods.
\end{abstract}

\section{INTRODUCTION}

The Feynman-Kac representation theory and its associated forward-backward stochastic differential equations \mbox{(FBSDEs)} has been gaining traction as a framework to solve nonlinear stochastic control problems, including optimal control problems with quadratic cost \cite{exarchos2018stochastic}, minimum-fuel ($L_1$-running cost) problems \cite{Exarchos2018, Exarchos2018a}, differential games \cite{Exarchos2016}, and reachability problems \cite{exarchos2018stochastic,mete2002stochastic}. FBSDE-based numerical methods have also received interest from the mathematical finance community \cite{Bender2007,Longstaff2001,ma2007forward}. 
Although initial results demonstrate promise in terms of flexibility and theoretical validity, numerical algorithms which leverage this theory have not yet matured. For even modest problems, state-of-the-art algorithms often have issues with slow and unstable convergence to the optimal policy. Producing more robust numerical methods is critical for the broader adoption of FBSDE methods for real-world tasks.

FBSDE numerical solution methods broadly consist of two steps, a forward pass, which generates Monte Carlo samples of the forward stochastic process, and a backward pass, which iteratively approximates the value function backwards in time. Typically, FBSDE methods perform this approximation using a least-squares Monte Carlo (LSMC) scheme, which implicitly solves the backward SDE with parametric function approximation \cite{Longstaff2001}. The approximate value function fit in the backward pass is then often used to improve sampling in an updated forward pass, leading to an iterative algorithm which, ideally, improves the approximation till convergence. Although FBSDE methods share a distinct similarity to differential dynamic programming (DDP) techniques \cite{jacobson1970differential,theodorou2010stochastic,NIPS2007_3297}, DDP is generally less flexible. For most DDP applications, a strictly positive definite running cost with respect to the control is required for convergence \cite[Section 2.2.3]{tassa2011theory}. Furthermore, in DDP, the computation of first and second order derivatives of both dynamics and costs is necessary for the backward pass, making it challenging to apply this approach to problems where these derivatives are not known analytically. In contrast, FBSDE techniques only require a good fit of the value function and the evaluation of the gradient of this value function to obtain the optimal control.

The flexibility of Feynman-Kac-based FBSDE algorithms stems from the intrinsic relationship between the solution of a broad class of second-order parabolic or elliptic PDEs to the solution of FBSDEs (see, e.g., \cite[Chapter 7]{yong1999stochastic}), brought to prominence in \cite{Pardoux1990,peng1993backward,el1997backward}. Both Hamilton-Jacobi-Bellman (HJB) and Hamilton-Jacobi-Isaacs (HJI) second order PDEs, utilized for solving stochastic optimal control and stochastic differential game equations respectively, can thus be solved via FBSDE methods, even when the costs and dynamics are nonlinear. This provides an alternative to the direct solution of PDEs, typically solved using grid-based methods such as the Level Set Toolbox \cite{Mitchell2004}, known for poor scaling in high dimensional state spaces ($n \geq 4$). 

The primary advantage of Feynman-Kac-based FBSDE methods is that they produce an unbiased estimator for the value function associated with the HJB equations. However, 
a na\"\i ve application of the theory leads to estimators with high variance by producing sample trajectories away from the optimal ones. Recent work has shown that Girsanov's theorem can be used to change the sampling measure of the forward pass without adding intrinsic bias to the estimator \cite{exarchos2018stochastic,Exarchos2018,Exarchos2018a}. That is, a change over probability spaces corresponds to the introduction of a drift to the forward SDE that can be employed to modify the sampling in the forward pass; this, in turn, requires appropriate accommodation of the change of measures in the backward pass.

In this work we expand upon the above results, by showing that the forward sampling measure can be modified at will, which enables us to incorporate methods from other domains, namely, rapidly-exploring random trees (RRTs)  (see, e.g., \cite{lavalle2001randomized} and the recent survey in \cite{noreen2016optimal}), in order to more efficiently explore the state space in the forward pass. RRTs are frequently applied to reachability-type motion planning problems, biasing the samples towards regions of the state space that have low density. Using RRTs in the forward sampling allows us to spread samples evenly over the reachable state space, increasing the likelihood that near-optimal samples are well-represented in the forward pass sample distribution. By sampling more efficiently and relying less on incremental approximations of the value function to guide our search, we can achieve faster and more robust convergence than previous FBSDE methods.
In the backward pass, we take advantage of the path-integrated running costs and estimates of
    the value function to produce a heuristic which weighs paths in the function approximation
    according to a local-entropy measure-theoretic optimization.
Although local-entropy path integral theory and RRTs have been used together in \cite{arslan2014}, called PI-RRT, this method is more closely related to the path-integral approach to control \cite{theodorou2010stochastic}.
Our method similarly performs forward passes to broadly sample the state space, but follows them with backward passes to obtain approximations for the value functions, and consequently obtain closed loop policies over the full horizon.

The primary contributions of this paper are as follows:
\begin{itemize}
    \item Providing the theoretical basis for the use of McKean-Markov branched sampling in the forward pass of FBSDE techniques.
    \item Introducing an RRT-inspired algorithm for sampling the forward SDE.
    \item Presenting a technique for concentrating value function approximation accuracy in regions containing optimal trajectories.
    \item Proposing an iterative numerical method for the purpose of approximating the optimal value function and its policy.
\end{itemize}
We call the proposed method forward-backward rapidly exploring random trees (FBRRT). After we describe the approach in both theory and numerical implementation, we apply FBRRT to two problems, comparing it to \cite{Exarchos2018}, and demonstrating its ability to solve nonlinear stochastic optimal control problems with non-quadratic running costs.

\section{THE HAMILTON-JACOBI EQUATION AND ON-POLICY VALUE FUNCTION} 


Let ${(\Omega, \F, \{\F_t\}_{t \in [0,T]}, \Q)}$, be a complete, filtered probability space, on which $W_s^\Q$ is an $n$-dimensional standard Brownian (Wiener) process with respect to the probability measure $\Q$ and adapted to the filtration $\{\F_t\}_{t \in [0,T]}$. Consider a stochastic system whose dynamics are governed by
\begin{align}
    \dX_s &= f(s,X_s,u_s)\, \ds + \sigma(s,X_s) \, \dW^\Q_s \text{,} & X_0 &= x_0 \text{,}
    \label{eq:SOCdyn}
\end{align}
where $X_s$ is a $\F_s$-progressively measurable state process on the interval $s \in [0,T]$, taking values in $\R^n$, $u_{[0,T]}$ is a progressively measurable input process on the same interval, taking values in the compact set $U \subseteq \R^m$, and $f: [0,T] \times \R^n \times U \rightarrow \R^n$, $\sigma: [0,T] \times \R^n \rightarrow \R^{n \times n}$ are the Markovian drift and diffusion functions respectively. The cost associated with a given control signal is
\begin{align}
    S_t[u_{[t,T]}] &:= \int^T_t \ell(s,X_s,u_s) \, \ds + g(X_T) \text{,} 
\end{align}
where $\ell: [0,T] \times \R^n \times U \rightarrow \R^+$ is the running cost, and \mbox{$g: \R^n \rightarrow \R^+$} is the terminal cost. Let membership of a function in $C^{l,k}_b$ denote that the function and its partial derivatives in $t$ of order $\leq l$ and in $x$ of order $\leq k$ are continuous and bounded on the domain. The membership in $C^{k}_b$ is defined similarly. We assume the functions $f$, $\sigma$, $a := \sigma \sigma^\top$ and $\ell$ belong to $C^{1,2}_b$, that $g \in C^3_b$, and that $\sigma^{-1}$ exists and is uniformly bounded on its domain.

The stochastic optimal control (SOC) problem is to determine the value function $V^*: [0,T] \times \R^n \rightarrow \R^+$ defined as
\begin{align}
    V^*(t,x) &= \inf_{u_{[t,T]}}  \E_{{\Q}}^{t,x}\big [\, S_t[u_{[t,T]} ] \,\big ] 
    \tag{SOC} \label{eq:SOC} \text{,} 
\end{align}
where $\E_{\Q}^{t,x}[\cdot] := \E_{\Q}[\cdot| X_t = x]$ denotes the conditional expectation  given $X_t = x_t$ under the probability measure~$\Q$. 
    
Under mild regularity assumptions, in particular that $\sigma \sigma^\top$ is uniformly positive definite, there exists a unique classical solution $V^* \in C^{1,2}_b$ to the Hamilton-Jacobi-Bellman PDE, as well as a (not necessarily unique) optimal Markov control policy $\pi^*$, which satisfies the inclusion
\begin{align}
    \pi^*(s,x) \in \argmin_{u \in U} \{ \ell(s,x,u) + f(s,x,u)^\top \partial_x V^*(s,x) \} \text{,}
    \label{eq:optpoli}
\end{align}
with the property that $V^*(t,x) = \E_{{\Q}}^{t,x}[\, S_t[\pi^*] \,]$, where $\partial_x V^*$ is the partial derivative of $V^*$ with respect to state~$x$ \cite[Chapter~4, Theorems~4.2 and 4.4, and Chapter~6, Theorem~6.2]{fleming1976deterministic}.
    



In this paper, instead of a direct solution of the HJB PDE, we work with a class of generic Markov policies $\mu : [0,T] \times \R^n \rightarrow U$ and their associated value functions $V^\mu$, and use iterative methods to approximate $V^*$ and $\pi^*$. 
The on-policy value function is defined as 
\begin{align}
    \begin{aligned}
        V^\mu(t,x) &= \E_{{\Q}}^{t,x}[\, S^\mu_t \,] \text{,} \\
        S^\mu_t &:= \int^T_t \ell^\mu_s \, \ds + g(X_T) \text{,} 
    \end{aligned} \label{eq:onpolicyvf}
\end{align}
with the process $X_s$ satisfying the forward SDE (FSDE)
\begin{align}
    \dX_s &= f^\mu_s \, \ds + \sigma_s \, \dW^\Q_s, \quad X_t = x \text{,}
    \label{eq:fsdeorig} 
\end{align}
where, for brevity of exposition, we define
\begin{align*}
    f^\mu_s &:= f(s,X_s,\mu(s,X_s)) \text{,}
\end{align*}
and similarly for $\ell$, $\sigma$.
We call $\mu$ an admissible Markov policy if it is Borel-measurable and its associated $V^\mu$ is the unique classic solution to the Hamilton-Jacobi PDE
\begin{gather}
    \partial_t V^\mu + \frac{1}{2} \tr[\sigma \sigma^\top \partial_{xx} V^\mu] 
    + (\partial_{x} V^\mu)^\top f^\mu + \ell^\mu \big |_{t,x} = 0 \text{,} \nonumber \\
    V^\mu(T,x) = g(x) \text{,} \tag{HJ} \label{eq:hjpde}
\end{gather}
for $(t,x) \in [0,T) \times \R^n$, where $\partial_t$ and $\partial_x$ are the partial derivative operators in $t$ and $x$, and $\partial_{xx}$ is the Hessian in $x$. Hence, the optimal control problem is expressed as $V^{*} = \min V^{\mu}$ over all $\mu$ such that \eqref{eq:hjpde}~holds.
Since the boundedness of $\sigma^{-1}$ makes the PDE non-degenerate parabolic, a sufficient, but not necessarily tight, condition guaranteeing existence of the classical solution is if $f^\mu$ and $\ell^\mu$ are in $C^{1,2}_b$ \cite[p. 156; Chapter~3, Theorem~4.2, Theorem~4.4]{fleming2006controlled}.
The same reference guarantees that \mbox{$V^* \equiv V^{\pi^*}$}.

\section{FEYNMAN-KAC-GIRSANOV FBSDE REPRESENTATION}

\subsection{On-Policy FBSDEs}
The positivity of $\sigma\sigma^{\top}$ yields that \eqref{eq:hjpde} is a parabolic PDE and, hence, by the Feynman-Kac Theorem (see, e.g. \cite{peng1991probabilistic}) it is linked to to the solution $(X_s,Y_s,Z_s)$ of the pair of FBSDEs composed of the FSDE \eqref{eq:fsdeorig} and the backward SDE (BSDE)
\begin{align}
    \dY_s &= -\ell^\mu_s \, \ds + Z^{\top}_s \, \dW_s^{\Q}, & Y_T &= g(X_T)  \text{,} 
    \label{eq:bsdeorig}
\end{align}
    where $Y_s$ and $Z_s$ are, respectively, $1$ and $n$-dimensional adapted processes.
\begin{theorem}[Feynman-Kac Representation] \label{thm:continfk}
    For the solution 
    $(X_s,Y_s,Z_s)$ to the \mbox{FBSDE} characterized by
    \eqref{eq:fsdeorig} and \eqref{eq:bsdeorig},
    it holds that
\begin{gather}
    \begin{aligned}
        Y_s &= V^\mu(s,X_s) \text{,} & s \in [0,T] \text{,} \\
        Z_s &= \sigma_s^\top \partial_x V^\mu(s,X_s) \text{,} & \text{a.e.} \;  s \in [0,T] \text{,}
    \end{aligned} \label{eq:yzv}
\end{gather}
    $\Q$-almost surely (a.s.), and, in particular,
\begin{align}
    Y_t &= \E_{\Q}[\wh{Y}_{t,\tau}| X_t] = V^\mu(t,X_t), 
    & \Q\text{-a.s.} \text{,}
    \label{eq:thmcontinfk}
\end{align}
    for $0 \leq t \leq \tau \leq T$ where
\begin{align}
    \wh{Y}_{t,\tau} &:= Y_\tau + \int_t^\tau \ell^\mu_s \ds \text{.} 
    \label{eq:bsdediff}
\end{align}
\hfill $\square$
\end{theorem}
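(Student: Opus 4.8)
The plan is to verify that the pair $(V^\mu(s,X_s),\, \sigma_s^\top \partial_x V^\mu(s,X_s))$ is itself a solution of the BSDE~\eqref{eq:bsdeorig} and then to invoke uniqueness to identify it with $(Y_s,Z_s)$. Since admissibility of $\mu$ guarantees that $V^\mu \in C^{1,2}_b$ is the classical solution of the Hamilton-Jacobi PDE~\eqref{eq:hjpde}, It\^o's formula applies directly to the composite process $V^\mu(s,X_s)$ along the forward diffusion~\eqref{eq:fsdeorig}. First I would compute
\[
    \dd V^\mu(s,X_s) = \Big( \partial_t V^\mu + (\partial_x V^\mu)^\top f^\mu + \tfrac{1}{2}\tr[\sigma_s \sigma_s^\top \partial_{xx} V^\mu] \Big)\big|_{s,X_s}\, \ds + (\partial_x V^\mu)^\top \sigma_s\, \dW^\Q_s,
\]
and then substitute~\eqref{eq:hjpde}, which forces the entire drift bracket to equal $-\ell^\mu_s$. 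This collapses the dynamics to $\dd V^\mu(s,X_s) = -\ell^\mu_s\, \ds + (\sigma_s^\top \partial_x V^\mu)^\top \dW^\Q_s$, whose diffusion coefficient is exactly the claimed expression for $Z_s$ and whose terminal value $V^\mu(T,X_T) = g(X_T)$ matches the terminal condition of~\eqref{eq:bsdeorig}.

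The identification then rests on a uniqueness argument. The crucial observation is that the driver of~\eqref{eq:bsdeorig} is $-\ell^\mu_s$, which depends on $(s,X_s)$ alone and not on $(Y_s,Z_s)$; the BSDE is therefore decoupled and linear, so standard well-posedness of the Pardoux-Peng type~\cite{Pardoux1990,peng1993backward} yields a unique square-integrable adapted solution. Because $V^\mu \in C^{1,2}_b$ makes $\partial_x V^\mu$ bounded and $\sigma$ is bounded by assumption, the candidate $Z_s = \sigma_s^\top \partial_x V^\mu(s,X_s)$ is bounded and hence admissible, so the two solutions must coincide and~\eqref{eq:yzv} follows. Equivalently, one may bypass the PDE altogether and represent the solution directly through the martingale representation theorem as $\E_\Q\big[g(X_T) + \int_s^T \ell^\mu_r\, \dd r \,\big|\, \F_s\big]$, which is $V^\mu(s,X_s)$ by definition~\eqref{eq:onpolicyvf}.

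For the ``in particular'' statement~\eqref{eq:thmcontinfk}, I would integrate~\eqref{eq:bsdeorig} over $[t,\tau]$ and rearrange, using the definition~\eqref{eq:bsdediff}, to obtain $\wh{Y}_{t,\tau} = Y_t + \int_t^\tau Z_s^\top\, \dW^\Q_s$. Taking $\E_\Q[\,\cdot\,|\,\F_t]$ annihilates the stochastic integral, since boundedness of $Z$ makes $\int Z^\top\, \dW^\Q$ a genuine (not merely local) martingale; as $Y_t = V^\mu(t,X_t)$ is $\F_t$-measurable, this leaves $\E_\Q[\wh{Y}_{t,\tau}\,|\,\F_t] = Y_t$. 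The Markov property of the diffusion then reduces the conditioning from $\F_t$ to $X_t$, yielding the stated chain of equalities.

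The main obstacle I anticipate is not a single hard estimate but the bookkeeping ensuring that the two technical preconditions hold at once: that the stochastic integral is a true martingale and that the BSDE solution is unique within the admissible class. Both hinge on the uniform boundedness of $\sigma$ and of $\partial_x V^\mu$ supplied by the $C^{1,2}_b$ regularity of the classical solution, so the conceptual crux is recognizing that the admissibility hypothesis on $\mu$ is exactly what makes the It\^o substitution close and the integrability hold; the remainder is a routine application of It\^o's formula together with~\eqref{eq:hjpde}.
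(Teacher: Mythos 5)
Your proposal is correct, but it takes a more self-contained route than the paper on the main identification \eqref{eq:yzv}: the paper simply cites \cite[Chapter~7, Theorem~4.5, (4.29)]{yong1999stochastic} for $Y_s = V^\mu(s,X_s)$ and $Z_s = \sigma_s^\top \partial_x V^\mu(s,X_s)$, whereas you re-derive that result by the standard verification argument --- It\^o's formula applied to $V^\mu(s,X_s)$, substitution of \eqref{eq:hjpde} to collapse the drift to $-\ell^\mu_s$, and uniqueness for the decoupled (driver independent of $(Y,Z)$, hence trivially Lipschitz) BSDE in the Pardoux--Peng class. This is precisely the proof underlying the cited theorem, so nothing is lost; what your version buys is transparency about where the admissibility hypothesis ($V^\mu \in C^{1,2}_b$ as a classical solution) enters, and it supplies two details the paper glosses over: boundedness of $Z_s$ making $\int Z_s^\top\,\dW^\Q_s$ a genuine martingale rather than a local one, and the reduction of conditioning from $\F_t$ to $X_t$ (for which the tower property alone suffices, since $Y_t = V^\mu(t,X_t)$ is $\sigma(X_t)$-measurable --- the full Markov property you invoke is more than needed). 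For \eqref{eq:thmcontinfk} your argument is the same as the paper's (integrate \eqref{eq:bsdeorig} over $[t,\tau]$, rearrange, kill the It\^o integral under conditional expectation); incidentally, your intermediate identity $\wh{Y}_{t,\tau} = Y_t + \int_t^\tau Z_s^\top\,\dW_s^\Q$ carries the correct sign, while the paper's displayed version has a harmless sign typo. One small caution: your closing aside that one can ``bypass the PDE'' via martingale representation is not quite immediate --- identifying the conditional expectation with $V^\mu(s,X_s)$ requires the Markov property (not merely the definition \eqref{eq:onpolicyvf}), and martingale representation yields $Z_s$ only abstractly, so recovering the explicit formula $Z_s = \sigma_s^\top \partial_x V^\mu(s,X_s)$ would still require the It\^o computation; since this is offered only as an alternative, it does not affect the validity of your main argument.
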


\begin{proof}
    Equations \eqref{eq:yzv} are due directly to
    \cite[Chapter~7, Theorem~4.5, (4.29)]{yong1999stochastic}.
    From the definition of It\^o integrals, we have
\begin{align}
    \wh{Y}_{t,\tau} &= Y_t 
        - \int_t^\tau Z^{\top}_s \dW_s^{\Q}  \text{.} 
\end{align}
    Taking the conditional expectation of both sides yields
\begin{align}
    \E_{\Q}[\wh{Y}_{t,\tau}|\F_t] &= \E_{\Q}[Y_t|\F_t] = Y_t \text{,}
\end{align}
    noting that the last term drops out due to the property of the It\^o integral
    \cite[p.~34, (5.26)]{yong1999stochastic}, and $Y_t$ passes through the conditional
    expectation because it is $\F_t$ measurable.
    Equation \eqref{eq:bsdediff} is a direct consequence of the definition of It\^o integrals,
    \cite[p.~33, (5.23)]{yong1999stochastic}.
\end{proof}


\subsection{Off-Policy FBSDEs}
\label{sec:OffPolicy}

Consider, contrary to the on-policy FBSDEs, the off-policy \textit{drifted FBSDEs}
\begin{align}
    \dX_s &= K_s \, \ds + \sigma_s \, \dW^\P_s, & X_0 &= x_0 \text{,}
    \label{eq:driftfsde} \\
    \dY_s &= -(\ell^\mu_s + Z^{\top}_s D_s) \, \ds + Z^{\top}_s \dW_s^{\P}, & Y_T &= g(X_T)  \text{,}
    \label{eq:driftbsde}
\end{align}
with
\begin{align}
    D_s &:= \sigma_s^{-1} (f^\mu_s - K_s) \text{,}
\end{align} 
where $K_s$, an arbitrary $\F_s$-progressively measurable and bounded process 
satisfying the smoothness conditions of \cite[Chapter~1, Theorem~6.16]{yong1999stochastic}, $\P$ the new probability measure associated with $K_s$ and $W^\P_s$ a Brownian process over the new, complete, filtered probability space ${(\Omega, \F, \{\F_t\}_{t \in [0,T]}, \P)}$. 
\begin{theorem} \label{thm:driftfbsde}
For the solution 
    $(X_s,Y_s,Z_s)$ to the \mbox{FBSDE} characterized by
    \eqref{eq:driftfsde} and \eqref{eq:driftbsde},
    it holds that
\begin{gather}
    \begin{aligned}
        Y_s &= V^\mu(s,X_s) \text{,} & s \in [0,T] \text{,} \\
        Z_s &= \sigma_s^\top \partial_x V^\mu(s,X_s) \text{,} & \text{a.e.} \;  s \in [0,T] \text{,}
    \end{aligned} \label{eq:yzv2}
\end{gather}
    $\P$-a.s., and in particular,
\begin{align}
    Y_t &= \E_{\P}[\wh{Y}_{t,\tau}| X_t] = V^\mu(t,X_t), 
    & \P\text{-a.s.} \text{,}
    \label{eq:thmcontinfk2}
\end{align}
    where
\begin{align}
    \wh{Y}_{t,\tau} &:= Y_\tau + \int_t^\tau (\ell^\mu_s + Z^{\top}_s D_s) \, \ds \text{.} 
    \label{eq:bsdediff2}
\end{align}
\hfill $\square$
\end{theorem}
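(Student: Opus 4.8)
The plan is to reduce the drifted FBSDE to the on-policy FBSDE of Theorem~\ref{thm:continfk} through a Girsanov change of measure and then import the conclusions of that theorem. First I would introduce the candidate Brownian motion via the pathwise relation $W^\P_s = W^\Q_s + \int_0^s D_r \, \dd r$, equivalently $\dW^\P_s = \dW^\Q_s + D_s \, \ds$, and verify that $W^\P$ is a standard Brownian motion under a suitably defined measure $\P$. Since $f$ lies in $C^{1,2}_b$ (so $f^\mu$ is bounded), $\sigma^{-1}$ is uniformly bounded, and $K_s$ is bounded by hypothesis, the process $D_s = \sigma^{-1}_s(f^\mu_s - K_s)$ is bounded; consequently the Dol\'eans-Dade exponential defining $\dd\P/\dd\Q$ satisfies Novikov's condition, is a true martingale, and makes $\P$ a genuine probability measure equivalent to $\Q$. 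Girsanov's theorem then certifies that $W^\P$ is $\P$-Brownian.

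The second step is a purely algebraic substitution. Because the identity $\dW^\Q_s = \dW^\P_s - D_s \, \ds$ holds pathwise and $\sigma_s D_s = f^\mu_s - K_s$, inserting it into the on-policy FSDE~\eqref{eq:fsdeorig} collapses the drift to $K_s$ and reproduces \eqref{eq:driftfsde}, while the same insertion into the on-policy BSDE~\eqref{eq:bsdeorig} generates the additional $-Z^\top_s D_s \, \ds$ term and reproduces \eqref{eq:driftbsde}. Hence an adapted triple satisfies the drifted system driven by $W^\P$ if and only if it satisfies the on-policy system driven by $W^\Q$: they are the same equations written against two related driving noises. The Feynman-Kac characterizations $Y_s = V^\mu(s,X_s)$ and $Z_s = \sigma^\top_s \partial_x V^\mu(s,X_s)$ of \eqref{eq:yzv} therefore transfer verbatim to \eqref{eq:yzv2}, and since $\P$ and $\Q$ share the same null sets the $\Q$-a.s.\ assertions become $\P$-a.s.\ assertions.

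For the conditional-expectation identity I would integrate the drifted BSDE~\eqref{eq:driftbsde} from $t$ to $\tau$, obtaining $\wh{Y}_{t,\tau} = Y_t + \int_t^\tau Z^\top_s \, \dW^\P_s$, exactly as in the proof of Theorem~\ref{thm:continfk} but with the compensated running cost absorbed into $\wh{Y}_{t,\tau}$. Taking the $\P$-conditional expectation given $\F_t$ annihilates the It\^o integral by the martingale property of stochastic integrals against the $\P$-Brownian motion $W^\P$, so $\E_\P[\wh{Y}_{t,\tau}|\F_t] = Y_t$; since $Y_t = V^\mu(t,X_t)$ is $\sigma(X_t)$-measurable and $X_t$ is $\F_t$-measurable, conditioning on $\F_t$ and on $X_t$ coincide, which yields \eqref{eq:thmcontinfk2}.

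The main obstacle is the first step: the entire reduction hinges on the Girsanov transformation being legitimate, i.e., on the exponential process being a true density rather than merely a local martingale. This is precisely where the boundedness hypotheses on $K_s$, $\sigma^{-1}$, and $f^\mu$ earn their keep, for without them $\P$ need not be a probability measure and the transfer of the Feynman-Kac representation would break down.
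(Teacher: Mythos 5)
Your proposal is correct and follows essentially the same route as the paper's proof: a Girsanov change of measure via $\dd W^\Q_s = \dd W^\P_s - D_s\,\ds$, transfer of the almost-sure Feynman--Kac identities \eqref{eq:yzv} through the equivalence of $\P$ and $\Q$, and then the same It\^o-integral conditional-expectation argument as in Theorem~\ref{thm:continfk}. The only difference is cosmetic: you verify the validity of the measure change directly via Novikov's condition (using the boundedness of $K_s$, $f^\mu$, and $\sigma^{-1}$), whereas the paper delegates this to the cited Girsanov theorem of Fleming and Rishel, which covers the same bounded-drift setting.
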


\begin{proof}
    Apply Girsanov's theorem  
    to both \eqref{eq:driftfsde} and \eqref{eq:driftbsde},
    where the Brownian process $W^\Q_s$ is defined as
    $\mathrm{d} W^\Q_s := \mathrm{d} W^\P_s - D_s \mathrm{d} s$ and the Radon-Nikodym
    derivative is defined according to \cite[Chapter 5, Theorem 10.1]{fleming1976deterministic}.
    Further, the theorem guarantees that 
    $\P$ and $\Q$ are equivalent measures in a measure-theoretic sense.
Since \eqref{eq:yzv} holds $\Q$-a.s., 
    there exists an $N \in \F$, $\Q(N) = 0$, such that $E^\mathsf{C} \subseteq N$,
    where $E := \{ \omega \in \Omega : \text{\eqref{eq:yzv} holds} \}$.
It subsequently follows from the definition of absolute continuity that $\P(N) = 0$,
    so \eqref{eq:yzv} holds $\P$-a.s. as well.
The rest follows similarly to Theorem~\ref{thm:continfk}
\end{proof}

We can interpret this result in the following sense.
As long as the diffusion function $\sigma$ is the same as in the on-policy formulation, we can pick an arbitrary process $K_s$ to be the drift term which generates a distribution for the forward process $X_s$ in the corresponding measure $\P$. The BSDE yields an expression for $Y_t$ using the same process $W^{\P}_s$ as used in the FSDE. The term $Z^{\top}_s D_s$ acts as a correction in the BSDE to compensate for changing the drift of the FSDE. We can then use the relationship \eqref{eq:thmcontinfk2} to solve for the value function $V^\mu$, whose conditional expectation can be evaluated in $\P$. Although used in the analytic construction of the value function, the measure $\Q$ does not require approximation to solve for the value function.

It should be highlighted that $K_s$ need not be a deterministic function of the random variable $X_s$, as is the case with $f^\mu_s$. For instance, it can be selected as the function \mbox{$K_s(\omega) = h(s,X_s(\omega),\omega)$} for some appropriate function $h$, producing a non-trivial joint distribution for the random variables $(X_t,K_t)$.

A remarkable feature of both the on- and off-policy FBSDEs is that the forward 
    pass is decoupled from the backward pass, that is, the evolution of the forward SDE 
    does not explicitly depend on $Y_s$ or $Z_s$
    (whereas in the Stochastic Maximum Principle formulations 
    (see, e.g., \cite[Chapter 3]{yong1999stochastic}) the decoupling is irremovable). 
This feature forms the basis of FBSDE numerical investigations of stochastic optimal control  
    \cite{Bender2010, exarchos2018stochastic}, but the significant difference of 
    Theorem~\ref{thm:continfk} in comparison to those results is that the focus is shifted here 
    from the solution of the HJB equation towards the broader class of functions satisfying 
    the \eqref{eq:hjpde}.
This provides a stronger case for policy iteration methodologies, because the theory does not
    require or expect $\mu$ to be an optimal policy, as is in 
    \cite{Bender2010, exarchos2018stochastic}.
Although not evaluated in this work, $\mu$ can be chosen according to design specifications
    other than estimating the optimal policy, such as to ensure the current policy
    is close to the previously estimated policy.

\subsection{Local Entropy Weighing}

As discussed in Section \ref{sec:OffPolicy}, the disentanglement of the forward sampling from the backward function approximation provides the opportunity to employ broad sampling schemes to cover the state space with potential paths. However, fitting a value function broadly to a wide support distribution might degrade the quality of the function approximation since high accuracy of function approximation is more in demand in those parts of the state space in proximity to optimal trajectories. Once forward sampling has been performed and some parts of the value function have been approximated, we can begin forming a heuristic in which sample paths closer to optimal trajectories are weighted more to concentrate value function approximation accuracy in those regions.

To this end, we propose using a bounded heuristic random variable $\rho_t$ 
to produce a new measure $\pR_t$, the weighted counterpart to $\P_t$, where the subscript refers to the restriction of $\P$ to $\F_t$. In order to avoid underdetermination of the regression by concentration over a single or few samples, we select $\pR_t$ as 
\begin{align}
    \pR_t \in \argmin_{\pR_t} \big \{ \E_{\pR_t}[\rho_t] + \lambda \mathcal{H}(\pR_t \| \P_t) \big \} \text{,} \label{eq:MinRelativeEntropyMeasure}
\end{align}
with $\lambda > 0$ a tuning variable 
and 
\begin{align}
    \mathcal{H}(\pR_t \| \P_t) = \E_{\pR_t}  \bigg[ \log \bigg ( 
    \frac{\mathrm{d} \pR_t}{\mathrm{d} \P_t} \bigg )  \bigg] \text{,}
\end{align}
is the relative entropy of $\pR_t$ which takes its minimum value when $\pR_t = \P_t$, the distribution in which all sampled paths have equal weight. 


The minimizer \eqref{eq:MinRelativeEntropyMeasure}, which balances between the value of $\rho$ and the relative entropy of its induced measure, has a solution of $\pR^*_t$ determined \cite[p.~2]{theodorou2012relative} as
\begin{align}
    \mathrm{d} \pR^*_t 
    &= \Theta_t 
    \mathrm{d} \P_t  \text{,} \label{eq:changemeas} \\
    \Theta_t &:= \frac{ \exp(-\sfrac{1}{\lambda} \rho_t) }{ \E_{\P_t}[\exp(-\sfrac{1}{\lambda} \rho_t)] } 
    \label{eq:thetadef}
    \text{.}
\end{align}

Henceforth, we let $\pR_t$ refer to this minimizer $\pR^*_t$. In the numerical approximation of this heuristic we can interpret the weights as a \textit{softmin} operation over paths according to the heuristic, a method often used in deep learning literature~\cite{Goodfellow-et-al-2016}.

\begin{theorem}\label{thm:weight}
Assume $\rho_\tau$ is selected such that $W^\P_s$ is Brownian on the interval $[t,\tau]$
    in the induced measure $\pR_\tau$.
It holds that
\begin{align}
    Y_t &= \E_{\P_\tau}[\wh{Y}_{t,\tau}| X_t] = V^\mu(t,X_t), 
    & \pR_\tau\text{-a.s.} \text{,}
    \label{eq:thmcontinfk3}
\end{align}
    where $\wh{Y}_{t,\tau}$ is defined in \eqref{eq:bsdediff2}.
Furthermore, the minimizer $\phi^*$ of the optimization
\begin{align}
    &\inf_{\phi \in L_2} 
    \E_{\pR_\tau}[(\wh{Y}_{t,\tau} - \phi(X_t))^2] \nonumber \\
    &\quad = \inf_{\phi \in L_2} 
    \E_{\P_\tau}[\Theta^{\pR|\P}_\tau (\wh{Y}_{t,\tau} - \phi(X_t))^2] \text{,}
    \label{eq:thmcontinfk4}
\end{align}
    over $X_t$-measurable square integrable variables $\phi(X_t)$ coincides with
    the value function $\phi^*(X_t) = V^\mu(t,X_t)$.
\hfill $\square$
\end{theorem}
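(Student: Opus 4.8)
The plan is to handle the two assertions of the theorem separately. I would reduce the identity \eqref{eq:thmcontinfk3} to Theorem~\ref{thm:driftfbsde} by an equivalence-of-measures argument, and establish the variational claim in \eqref{eq:thmcontinfk4} through the characterization of conditional expectation as the $L_2$ orthogonal projection (least-squares minimizer). The only genuinely new ingredient is the hypothesis that $W^\P_s$ stays Brownian on $[t,\tau]$ under $\pR_\tau$: this is precisely what forces the residual stochastic integral to retain a vanishing conditional mean after the reweighting, and I expect it to be the delicate step.

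First I would note that, since $\rho_\tau$ is bounded, the density $\Theta^{\pR|\P}_\tau = \mathrm{d}\pR_\tau/\mathrm{d}\P_\tau$ from \eqref{eq:changemeas}--\eqref{eq:thetadef} is bounded above by a finite constant and below by a strictly positive constant; hence $\pR_\tau$ and $\P_\tau$ are equivalent and share the same null sets. Theorem~\ref{thm:driftfbsde} already provides $Y_t = \E_{\P_\tau}[\wh Y_{t,\tau}|X_t] = V^\mu(t,X_t)$ $\P$-a.s., so the identical relation holds $\pR_\tau$-a.s., which is \eqref{eq:thmcontinfk3}. The equality of the two expressions in \eqref{eq:thmcontinfk4} is then immediate: for each fixed $\phi$ the definition of the Radon--Nikodym derivative gives $\E_{\pR_\tau}[(\wh Y_{t,\tau}-\phi(X_t))^2] = \E_{\P_\tau}[\Theta^{\pR|\P}_\tau(\wh Y_{t,\tau}-\phi(X_t))^2]$, so the two infima coincide.

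Next, for the variational characterization I would invoke the standard fact that, over square-integrable $\sigma(X_t)$-measurable variables $\phi(X_t)$, the minimizer of $\E_{\pR_\tau}[(\wh Y_{t,\tau}-\phi(X_t))^2]$ is the conditional expectation $\phi^*(X_t)=\E_{\pR_\tau}[\wh Y_{t,\tau}|X_t]$ (here $\wh Y_{t,\tau}\in L_2(\pR_\tau)$ follows from $\wh Y_{t,\tau}\in L_2(\P_\tau)$ and the boundedness of $\Theta^{\pR|\P}_\tau$). It then remains to identify this conditional expectation with $V^\mu(t,X_t)$. Integrating the BSDE \eqref{eq:driftbsde} over $[t,\tau]$ and comparing with \eqref{eq:bsdediff2} shows that $\wh Y_{t,\tau}$ differs from $Y_t$ by the stochastic integral $\int_t^\tau Z_s^\top\,\dW_s^\P$, while $Y_t=V^\mu(t,X_t)$ is a deterministic function of $X_t$ and hence $\sigma(X_t)$-measurable.

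Finally, I would condition this representation on $X_t$. Because $X_t$ is $\F_t$-measurable, the tower property reduces the task to showing $\E_{\pR_\tau}[\int_t^\tau Z_s^\top\,\dW_s^\P\,|\,\F_t]=0$, and this is exactly where the hypothesis is used: with $W^\P_s$ Brownian on $[t,\tau]$ under $\pR_\tau$ and $Z_s$ being $\F_s$-adapted and square integrable, $\int_t^{\cdot}Z_s^\top\,\dW_s^\P$ is a $\pR_\tau$-martingale, so its increment has zero conditional mean and the integral drops out. This leaves $\E_{\pR_\tau}[\wh Y_{t,\tau}|X_t]=\E_{\pR_\tau}[Y_t|X_t]=Y_t=V^\mu(t,X_t)$, giving $\phi^*$ as claimed. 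The main obstacle is precisely this martingale step: absent the Brownian hypothesis, the reweighting by $\Theta^{\pR|\P}_\tau$ would in general induce a drift in $W^\P_s$ under $\pR_\tau$, the stochastic integral would no longer vanish in conditional mean, and the least-squares minimizer would be biased away from $V^\mu$.
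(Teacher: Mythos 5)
Your proof is correct and takes essentially the same route as the paper's: boundedness of $\rho_\tau$ gives a strictly positive, normalized density $\Theta_\tau$, hence equivalence of $\pR_\tau$ and $\P_\tau$, which transfers the a.s.\ identity of Theorem~\ref{thm:driftfbsde} to yield \eqref{eq:thmcontinfk3}, while \eqref{eq:thmcontinfk4} follows from the change of measure together with the $L_2$-projection characterization of conditional expectation. The only difference is that you make explicit the step the paper leaves implicit --- writing $\wh{Y}_{t,\tau} = Y_t + \int_t^\tau Z_s^\top \, \dW_s^\P$ and invoking the hypothesis that $W^\P_s$ remains Brownian under $\pR_\tau$ so that this stochastic integral is a $\pR_\tau$-martingale increment with zero conditional mean, giving $\E_{\pR_\tau}[\wh{Y}_{t,\tau} \,|\, X_t] = Y_t = V^\mu(t,X_t)$ --- which is precisely where the theorem's assumption is consumed, and is a sound (indeed clarifying) elaboration rather than a departure from the paper's argument.
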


\begin{proof}
First, note that $X_s$, $Y_s$, $Z_s$, and $\wh{Y}_{t,\tau}$ are $\F_\tau$-measurable for $s, t \in [0,\tau]$.
    Thus, restricting $\P$ to $\F_\tau$ in Theorem~\ref{thm:driftfbsde}, producing $\P_\tau$, 
    results in the same assumptions for those variables.
    Since $\rho_\tau$ is bounded, $\Theta_\tau > 0$ $\P$-a.s..
    Further, we have $\E_{\P_\tau}[\Theta_\tau] = 1$, so the variable is normalized.
    It is easy to see that this guarantees that $\pR_\tau$ is a probability measure and
    the measures $\pR_\tau$ and $\P_\tau$ are equivalent.
    It follows that \eqref{eq:thmcontinfk3} holds.
    Equation \eqref{eq:thmcontinfk4} is a result of the $L_2$-projective properties
    of conditional expectation \cite{resnick2003probability}
    and then a change of measure with \eqref{eq:changemeas}.
\end{proof}
In the following section, we evaluate the minimization of the right hand side of 
    \eqref{eq:thmcontinfk4} over parameterized value function models 
    to obtain an estimate of the value function.

\begin{figure}
    \centering
    \subfloat[Optimal Distribution\label{fig:distopt}]{
        \includegraphics[width=0.47\linewidth]{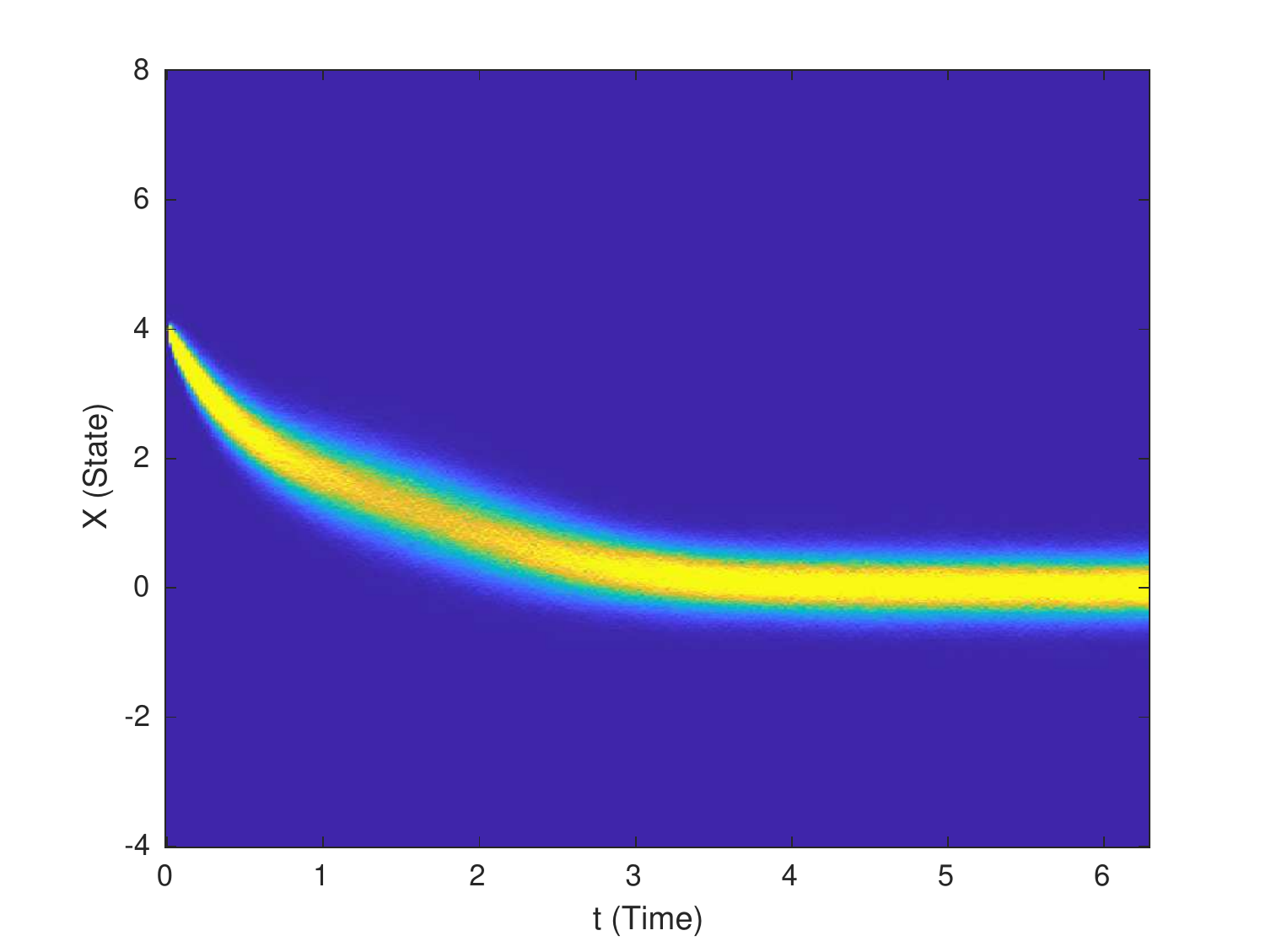}} 
    \subfloat[Parallel-Sampled Suboptimal\label{fig:distsubopt}]{
        \includegraphics[width=0.47\linewidth]{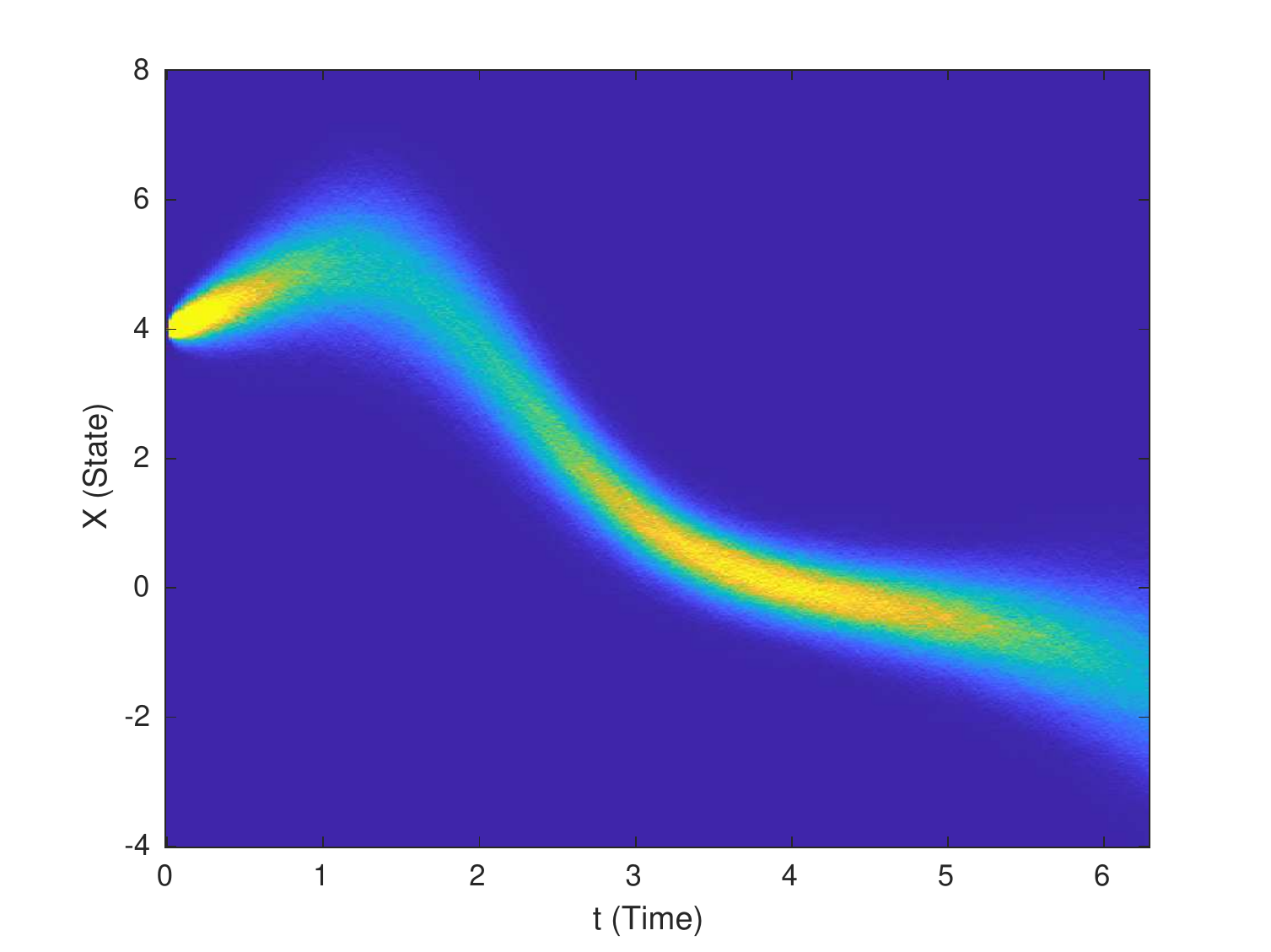}} \\
    \subfloat[RRT-Sampled ($\P$)\label{fig:distrrt}]{
        \includegraphics[width=0.47\linewidth]{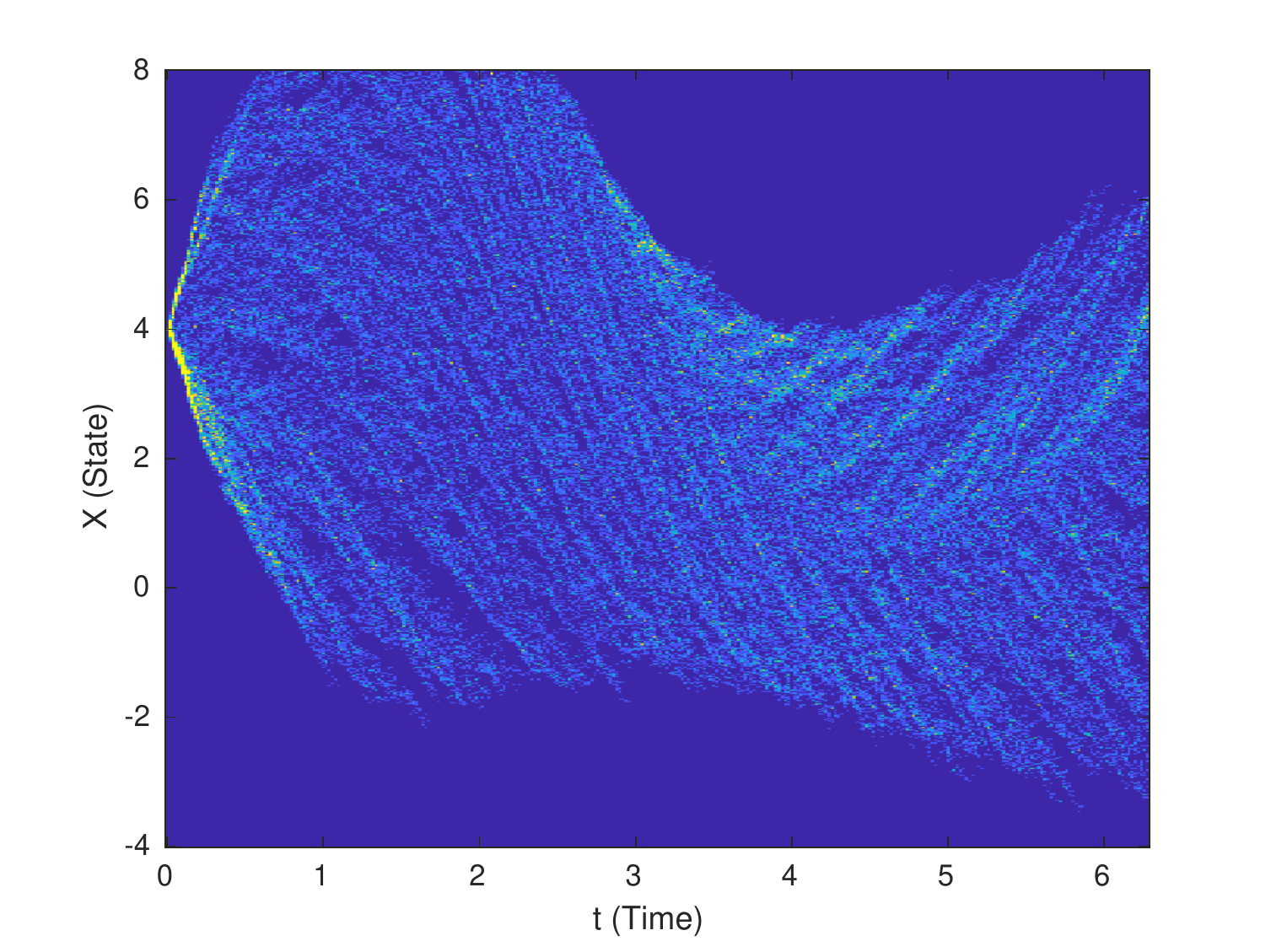}} 
    \subfloat[RRT-Sampled, Weighted ($\pR$)\label{fig:distweight}]{
        \includegraphics[width=0.47\linewidth]{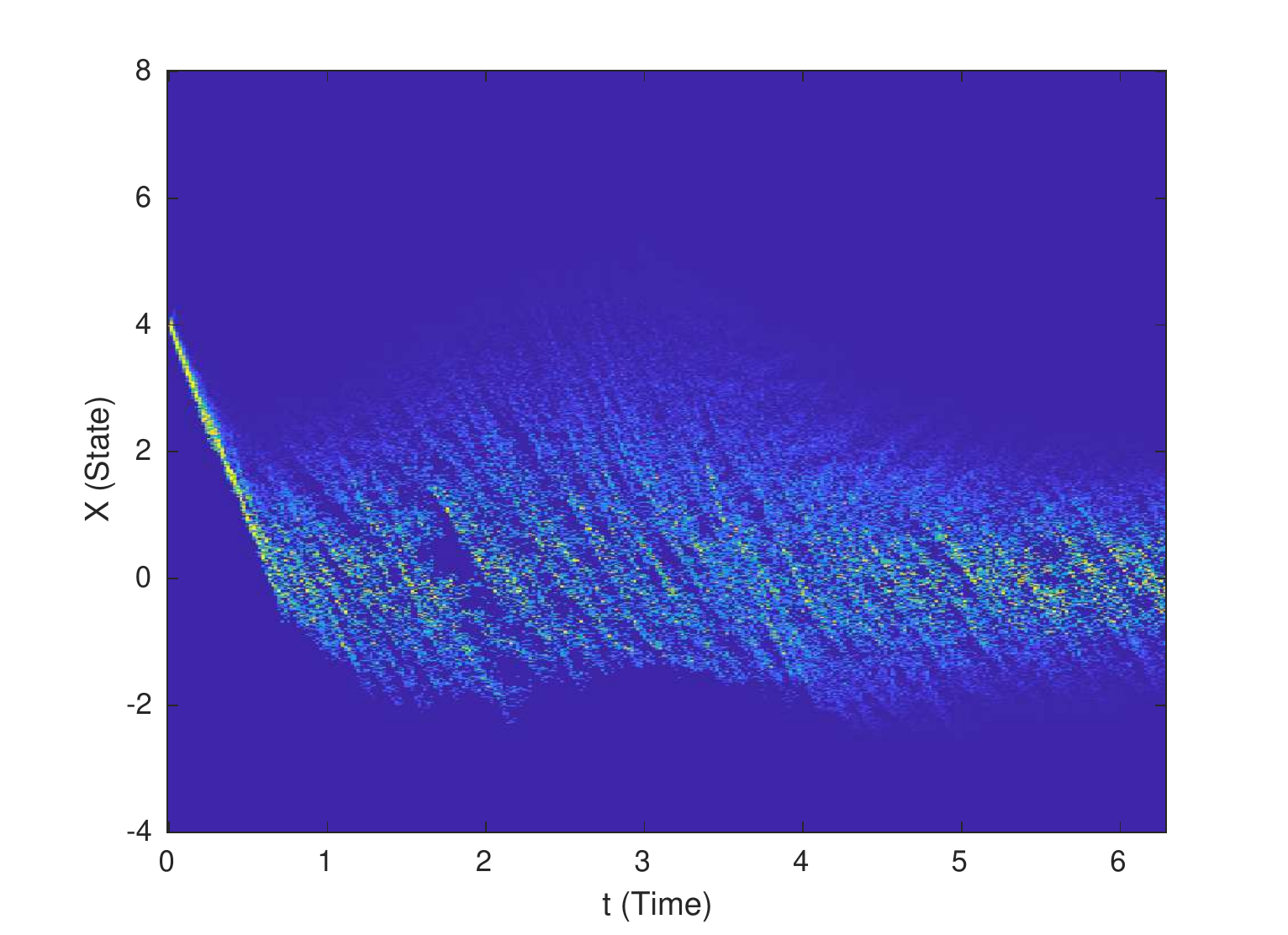}}
    \caption{Heatmap of different measure distributions for a $1$-dimensional SOC problem, 
        illustrating how RRT-sampling and local-entropy weighing can accelerate
        discovery of the optimal distribution.}
    \label{fig:distributions}
\end{figure}

To summarize, in this section we introduced three measures, (a) $\Q$, the measure associated with the target policy $\mu$ for the value function $V^\mu$, (b) $\P$, the sampling measure used in the forward pass to explore the state space, and (c) $\pR_\tau$, the local-entropy weighted measure used in the backward pass to control function approximation accuracy.
Fig.~\ref{fig:distributions} illustrates how these results work together to rapidly discover the optimal distribution. An on-policy method assumes the knowledge of an initial suboptimal control policy, sampled as represented in Fig.~\ref{fig:distributions}~\subref{fig:distsubopt}, and the suboptimal value function is solved in that distribution. This method requires iterative improvement of the policy to produce a distribution which overlaps with the optimal distribution. However, if we begin with a sampling measure which broadly explores the state space as in Fig.~\ref{fig:distributions}~\subref{fig:distrrt}, we can produce an informed heuristic which weighs this distribution as in Fig.~\ref{fig:distributions}~\subref{fig:distweight}, so that the function approximation is concentrated in a near-optimal distribution.
These results leave open the choice for a target policy $\mu$ that produces $\Q$, the drift process $K_s$ that produces $\P$ and the weighing function $\rho_\tau$ that produces $\pR_\tau$. In the following section we propose particular choices for each.

\section{FORWARD-BACKWARD RRT}

In this section, we introduce a numerical method that leverages the continuous-time
    theory of the previous section.
We begin by discussing a generalized approach to approximating the sampling distribution
    $\P$ with a branch-sampling representation.
Next, we introduce FBRRT, an iterative algorithm for solving the SOC problem.
We then propose an RRT-inspired algorithm that leverages the previous theory.
Finally, we propose a heuristic variable $\rho$ for weighing paths.

\subsection{McKean-Markov Branched Sampling}

We approximate the continuous-time sampling distributions with discrete-time 
    McKean-Markov branch sampled paths as presented in \cite{del2013mean}.
First, for a given $\Delta t$, the interval $[0,T]$ is partitioned according to the time steps
    $(t_0 = 0, \ldots, t_i = (\Delta t) i, \ldots, t_N = T)$.
For brevity, we abbreviate $X_{t_i}$ as $X_i$ and similarly for most variables.

In the forward sampling process, we produce a series of path measures $\{\fP_i\}_{i=0}^N$,
\begin{align}
    \fP_i &:=
    \frac{1}{M} \sum_{j=1}^M \delta_{\xi_i^j} \text{,}
\end{align}
    where $\delta$ is the Dirac-delta measure acting on sample paths
\begin{align}
    \xi_i^j &:= (x_{0,i}^j, k_{0,i}^j, 
    x_{1,i}^j, k_{1,i}^j,
    \ldots, k_{i-1,i}^j, x_{i,i}^j) \text{,}
\end{align}
    with $x_{j,i}^j, k_{j,i}^j \in \R^n$.
The path notation $x_{j,i}^j$ indicates that this element is the sample of random variable
    $X_j$ that is the ancestor of sample $x_{i,i}^j$ in path $\xi_i^j$.
Fig.~\ref{fig:parallelvsweighted}~\subref{fig:branched} illustrates how these measures are
    represented using a tree data structure.
Each node in the tree $x^j_i$, alternatively called a particle, 
    is associated with a path $\xi_i^j$ whose final term
    is $x_{i,i}^j = x^j_i$.

The edges in the tree represent an Euler-Maruyama SDE step approximation of the forward
    SDE \eqref{eq:driftfsde}.
When a node in the tree at time $i$ is selected for expansion, it becomes the
    $x^j_{i,i+1}$ element in the path $\xi^j_{i+1}$, its ancestry also included.
The element $k^j_{i,i+1} \sim h(x^j_{i,i+1})$ is sampled from some random function
    which can depend on the state, and, independently, 
    $w^j_{i,i+1} \sim \mathcal{N}(0,\Delta t I_n)$.
The next state in the path is computed as
\begin{align}
    x^j_{i+1,i+1} = x^j_{i,i+1} + k^j_{i,i+1} \Delta t + \sigma(t_i, x^j_{i,i+1}) w^j_{i,i+1}
    \text{.}
\end{align}
    
The measures $\fP_i$ and $\fP_{i+1}$ may not agree on the
    interval $[0,t_i]$.
To see why this is permissible, 
    consider Theorem~\ref{thm:weight} with $\tau = t_{i+1}$ and $t = t_i$.
In a backward step, some $\P_{i+1}$ is used to produce a relationship to solve for the
    deterministic function $V^\mu(t_i,x)$.
But an independent application of the theorem with $\tau = t_i$ and $t = t_{i-1}$
    can use any new measure $\P_i$.
The only requirement is that each $\fP_i$ is consistent with the assumptions placed on $\P_i$.
\begin{figure}
    \centering
    \subfloat[Parallel-Sampled \label{fig:parallel}]{
        \includegraphics[width=0.43\linewidth]{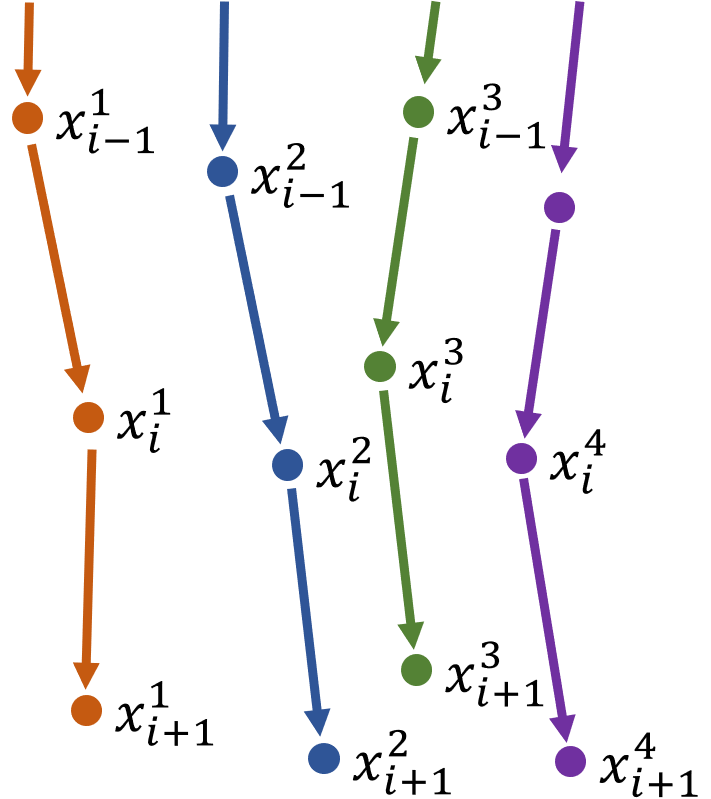}} 
    \subfloat[Branch-Sampled \label{fig:branched}]{
        \includegraphics[width=0.43\linewidth]{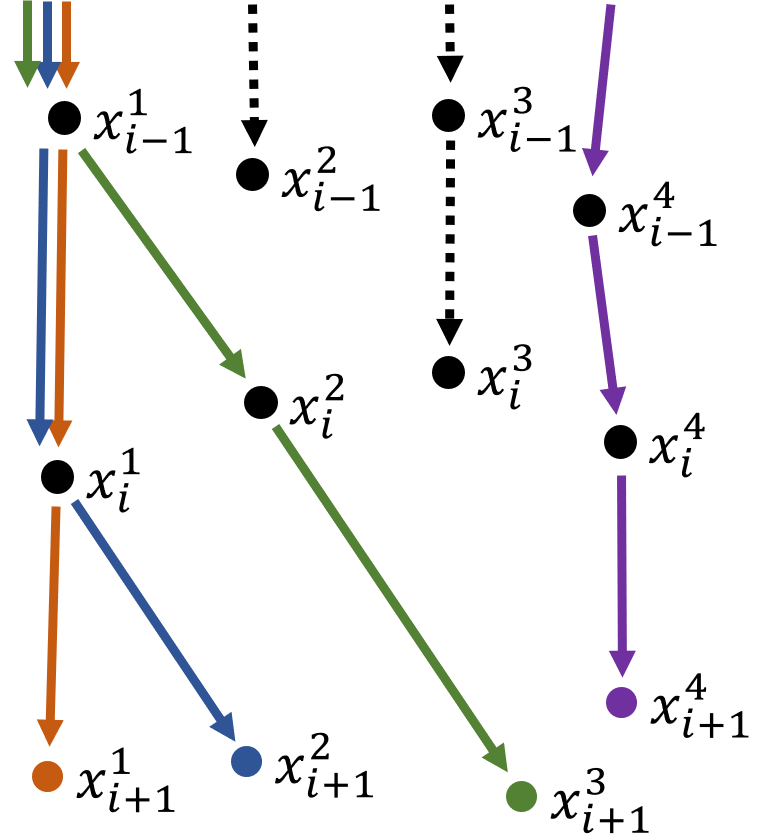}}
        \caption{Comparing parallel-sampling of the path measure $\fP_{i+1}$, 
            in which SDE paths are sampled independently, 
            to the proposed representation.
            Dotted edges are present in the data structure but do not contribute to the path
        measure $\fP_{i+1}$ (but will contribute to $\fP_{i}$ and $\fP_{i-1}$).}
    \label{fig:parallelvsweighted}
\end{figure}

In the construction of $\fP_{i+1}$ in Fig.~\ref{fig:parallelvsweighted}~\subref{fig:branched} 
    we can see that some
    edges are multiply represented in the distribution.
If the drift term $K_i$ were a deterministic function of $X_i$, such a construction
    would represent an unfaithful characterization of the path distribution because
    samples of the Brownian process are independent and thus should be sampled 
    as in Fig.~\ref{fig:parallelvsweighted}~\subref{fig:parallel}.
However, since $K_i$ itself has a distribution, we can interpret overlapping paths
    as the drift having been selected so as to concentrate the paths in a certain part of the
    state space.
The faithful representation of the independent process $W^\P_s$ might be weakened by this
    construction, but some guarantees about the convergence of such measures with increasing
    numbers of samples are available in \cite{del2013mean}.

\subsection{FBRRT Iterative Algorithm}

The goal of the FBRRT algorithm is to produce the set of parameters $\{ \alpha_i \}_{i=1}^N$
    which approximate the optimal value function $V(x;\alpha_i) \approx V^*(t_i, x)$.
The forward pass produces a graph representation $\mathcal{G}$ of the path measures $\{ \fP_i \}_{i=1}^N$.
Given that the optimal policy has the form \eqref{eq:optpoli},
    we define the target policy
\begin{align}
    &\mu_i(x;\alpha_{i+1}) \label{eq:optpolicy2} \\
    &\quad = \argmin_{u \in U} \{ \ell(t_i,x,u) +  f(t_i,x,u)^\top \partial_x V(x;\alpha_{i+1}) \}
    \text{,} \nonumber
\end{align}
    so that it coincides with the optimal control policy
    when the value function approximation is exact.
The backward pass uses $\mathcal{G}$, $\mu_i$, and $\rho_{i+1}$ to produce $\alpha_i$, backwards
    in time.
At the beginning of the next iteration,
    nodes with high heuristic value $\rho_{i+1}$ are pruned from the tree and $\mathcal{G}$ is 
    regrown from those remaining.

\subsection{Kinodynamic RRT Forward Sampling} \label{sec:kinorrt}

In general, we desire sampling methods which seek to explore the whole state space,
    increasing the likelihood of sampling in the proximity of optimal trajectories.
For this reason, we chose methods inspired by kinodynamic RRT, 
    proposed in \cite{lavalle2001randomized}.
The selection procedure for this method ensures that the distribution of the chosen
    particles is more uniformly distributed in a user-supplied region of interest
    $\mathcal{X}^\text{roi} \subseteq \R^n$, more likely to select particles
    which explore empty space, and less likely to oversample dense
    clusters of particles.

With some probability ${\eps^{\text{rrt}}_i \in [0,1]}$ we choose the RRT
    sampling procedure, 
    but otherwise choose a particle uniformly from $\{x_i^{j}\}_{j=1}^M$, 
    each particle with equal weight.
This ensures dense particle clusters will still receive more attention.
Thus, the choice of the parameter $\eps^{\text{rrt}}_i$ balances exploring the state space
    against refining the area around the current distribution.

For drift generation we again choose a random combination of exploration and exploitation.
For exploitation we choose
\begin{align}
    K_i &= f(t_i, X_i, \mu_i(X_i;\alpha_i)) \text{.}
    \label{eq:koptdrift} 
\end{align}
For exploration we choose
\begin{align}
    K_i &= f(t_i, X_i, u^\text{rand}) \text{.}
\end{align}
    where the control is sampled randomly from a user supplied set
    $u^\text{rand} \sim U^\text{rand}$.
For example, for minimum fuel ($L_1$) problems where control is bounded $u \in [-1, 1]$ and
    the running cost is $L = |u|$, we select $U^\text{rand} = \{ -1, 0, 1 \}$
    because the policy \eqref{eq:optpolicy2} 
    is guaranteed to only return values in this discrete set.

Algorithm~\ref{alg:rrtbranchsamp} sketches out the implementation of 
    the RRT-based sampling procedure, producing the forward sampling tree $\mathcal{G}$.
The algorithm takes as input any tree with width $\wt{M}$ 
    and adds nodes at each depth until the
    width is $M$, the parameter indicating the desired width.
On the first iteration there are no value function estimate parameters available
    to exploit,
    so we set $\eps^\text{rrt}=1$ to maximize exploration using the RRT sampling.


\begin{algorithm}
\caption{RRT Branched-Sampling}\label{alg:rrtbranchsamp}
\begin{algorithmic}[1]
    \Procedure{ForwardExpand}{$\mathcal{G},(\alpha_1,\ldots,\alpha_N)$}
    \For{$k = \wt{M}+1,\cdots,M $ } \Comment{Add node each loop}
        \For{$i = 0,\cdots,N-1 $ } \Comment{For each time step}
            \State $\{x^j_i\}_j \gets \mathcal{G}\text{.nodesAtTime}(i)$
            \If{$\eps^\text{rrt} > \kappa^\text{rrt} \sim \text{Uniform}([0,1])$}
                \State $x_i^{\text{rand}} \sim \text{Uniform}(\mathcal{X}^\text{roi})$
                \State $(x_i^{\text{near}},j^{\text{near}}) \gets 
                \text{Nearest}(\{x^j_i\}_j, x_i^{\text{rand}})$
            \Else
                \State $(x_i^{\text{near}},j^{\text{near}}) \sim \text{Uniform}(\{x^j_i\}_j)$
            \EndIf \Comment{$j^{\text{near}}$ is index of selected node}
            \If{$\eps^\text{opt} > \kappa^\text{opt} \sim \text{Uniform}([0,1])$}
                \State $u_i \gets \mu_i(x_i^{\text{near}};\alpha_{i+1})$ 
                \Comment{\eqref{eq:optpolicy2}}
            \Else
                \State $u_i \sim U^\text{rand}$
            \EndIf
            \State $k_i \gets f(t_i, x_i^\text{near}, u_i)$
            \State $w_i \sim \mathcal{N}(0,\Delta t I_{n})$
            \State $x_{i+1}^\text{next} \gets x_i^{\text{near}} + 
            k_i \Delta t + \sigma(t_i,x_i^{\text{near}}) w_i$ 
            \State $j^\text{next} \gets 
            \mathcal{G}\text{.addEdge}(i,j^\text{near}, (x^{\text{near}}_{i}, 
                k_i, x_{i+1}^\text{next}))$
            \State $\overrightarrow{\ell}_{0:i-1} 
                \gets \mathcal{G}\text{.getRunCost}(i-1, j^{\text{near}})$ 
            \State $\overrightarrow{\ell}_{0:i} 
                \gets \overrightarrow{\ell}_{0:i-1} 
                + \ell_i(x_i^{\text{near}}, u_i) \Delta t$ \label{line:pathint}
            \State $\mathcal{G}\text{.setRunCost}(i, j^\text{next}, 
                \overrightarrow{\ell}_{0:i})$
        \EndFor
    \EndFor
    \State \textbf{return} $\mathcal{G}$
    \EndProcedure
\end{algorithmic}
\end{algorithm}

\subsection{Path-Integral Backwards Weighing} \label{sec:locentbw}

We now propose a heuristic design choice for the backward pass weighing variables
    $\rho_{i+1}$, and justify their choice with some theoretical results.
A good heuristic will give high weights to paths likely to have low value over the whole
    interval $[0,T]$.
Thus, in the middle of the interval we care both about the current running cost and the expected cost.
A dynamic programming principle result
    following directly from 
    \cite[Chapter~4, Corollary~7.2]{fleming2006controlled}
    indicates that
\begin{align*}
    &V^*(0,x_0) =  \\
&\quad \min_{u[0,t_{i+1}]} E_{\P_{i+1}^{u}}[\int_{0}^{t_{i+1}} \ell(s,X_s,u_s) \, \ds 
    + V^*(t_{i+1},X_{i+1})] \text{,}
\end{align*}
    where $u_{[0,t_{i+1}]}$ is any control process in $U$ on the interval $[0,t_{i+1}]$ and
    $\P_{i+1}^{u}$ is the measure produced by the drift 
    $K_s = f(s,X_s,u_s)$.
Following this minimization, we choose the heuristic to be
\begin{align}
    \rho_{i+1} = \int_{0}^{t_{i+1}} \ell(s,X_s,u_s) \, \ds 
    + V^*(t_{i+1},X_{i+1}) \text{,}
    \label{eq:rhodef}
\end{align}
    where $u_{[0,t_{i+1}]}$ is chosen identically to how the control for the drift is produced.
Although the theory does not require $K_s$ to be a feasible drift under the dynamic constraints,
    for reasons like this it is useful for it to be chosen in this way.
The running cost is computed in the forward sampling
    in line~\ref{line:pathint} of Algorithm~\ref{alg:rrtbranchsamp}.

\begin{algorithm}
\caption{Local Entropy Weighted LSMC Backward Pass}\label{alg:bptt}
\begin{algorithmic}[1]
    \Procedure{BackwardWLSMC}{$\mathcal{G}$}
    \State $\{\xi^j_N\}_j \gets \mathcal{G}\text{.pathsAtTime}(N)$
    \State $\{x^j_N\}_j \gets \{\xi^j_N\}_j$
    \State $y_N \gets [g(x^1_N) \; \cdots \; g(x^M_N)]^\top$
    \State $\alpha_N \gets 
        \argmin_\alpha \sum_j \Theta_{N} (\widehat{y}^j_{N} - \Phi(x^j_{N}) \alpha)^2$ 
    \For{$i = N-1,\cdots,1$} \Comment{For each time step}
        \State $\{\xi^j_{i+1} \}_j 
        \gets \mathcal{G}\text{.pathsAtTime}(i+1)$
        \For{$j = 1,\cdots,M$} 
            \Comment{For each path}
            \State $(x^j_{i}, k^j_{i}, x^j_{i+1})  
                \gets \xi^j_{i+1}$
            \Comment{$x^j_{i} = x^j_{i,i+1}$, etc.}
            \State $y^j_{i+1} 
                \gets \Phi(x^{j}_{i+1}) \alpha_{i+1}$ 
            \Comment{\eqref{eq:thmcontinfk3}}
            \State $z^j_{i+1} 
                \gets \sigma^\top_{i+1}(x^j_{i+1}) 
                \partial_x \Phi(x^{j}_{i+1}) \alpha_{i+1}$ 
            \Comment{\eqref{eq:yzv2}}
            \State $\mu^j_{i} 
            \gets \mu_{i}(x^j_{i}; \alpha_{i+1})$
            \Comment{\eqref{eq:optpolicy2}}
            \State $d^j_{i} 
            \gets \sigma^{-1}_{i+1}(x^j_{i+1}) (f^\mu_i - k_i^j)$ 
            \State $\wh{y}^j_i \gets 
            y^j_{i+1} + (\ell^\mu_i 
            + z^{j \top}_{i+1} d^j_{i})\Delta t$ 
            \Comment{\eqref{eq:bsdediff2}}
            \State $\overrightarrow{\ell}_{0:i} 
                \gets \mathcal{G}\text{.getRunCost}(i, j)$ 
            \State $\rho^j_{i+1} \gets 
            y^j_{i+1} + \overrightarrow{\ell}_{0:i}$ 
            \Comment{\eqref{eq:rhodef}}
        \EndFor
        \State $\rho_{i+1} \gets \rho_{i+1} 
            - \min_j \{ \rho^j_{i+1} \}$ \label{st:expcond}
        \Comment{$\exp$ conditioning}
        \State $\Theta_{i+1} \gets \exp(-\sfrac{1}{\lambda}\rho_{i+1})$
        \Comment{\eqref{eq:thetadef}}
        \State $\alpha_{i} \gets 
        \argmin_\alpha \sum_j \Theta^j_{i+1} 
        (\widehat{y}^j_{i} - \Phi(x^j_i) \alpha)^2$ 
        \Comment{\eqref{eq:thmcontinfk4}}
    \EndFor
    \State \textbf{return} $(\alpha_1,\ldots,\alpha_N)$
    \EndProcedure
\end{algorithmic}
\end{algorithm}

Algorithm~\ref{alg:bptt} details the implementation of the backward pass with
    local entropy weighting.
The value function is represented by a linear combination 
    of multivariate Chebyshev polynomials up to the 2nd order,
    ${V(x;\alpha_i) = \Phi(x) \alpha_i}$.
Line~\ref{st:expcond} does not, theoretically, have an effect on the optimization,
    since it will come out of the exponential as a constant multiplier,
    but it has the potential to improve the numerical conditioning of the 
    exponential function computation
    as discussed in \cite[Chapter~5, equation (6.33)]{Goodfellow-et-al-2016}.
The $\lambda$ value is, in general, a parameter which must be selected by the user.
For some problems we choose to search over a series of of possible $\lambda$
    parameters, evaluating each one with a backward pass and using the
    one that produces the smallest expected cost over a batch of trajectory rollouts
    executing the computed policy.



\begin{figure}
    \centering
    \subfloat{
        \includegraphics[width=0.97\linewidth,clip]{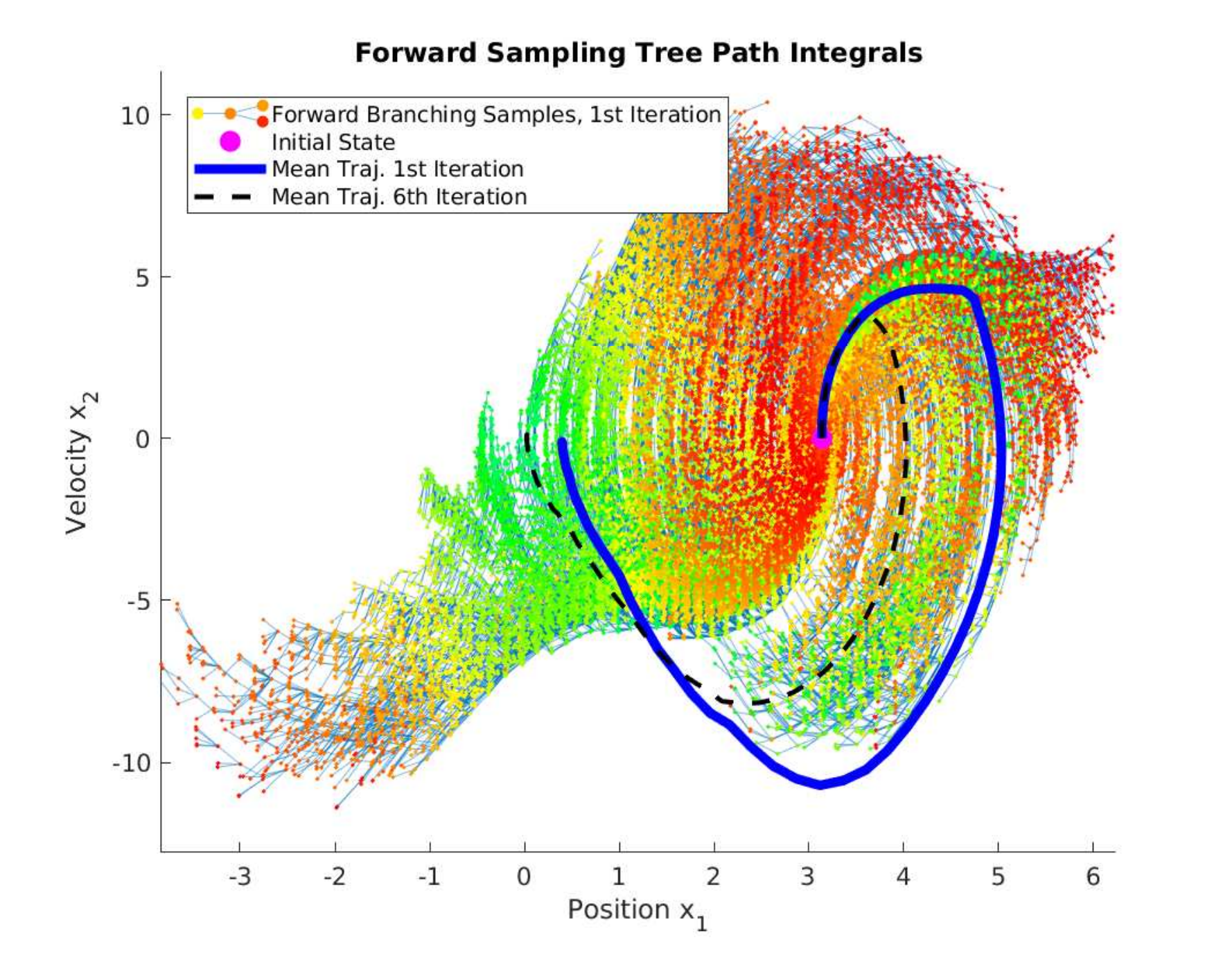}} \\
    \centering
    \subfloat{
        \includegraphics[width=0.97\linewidth,trim=0 0 0 0,clip]{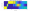}}
\caption{Forward sampling tree for the first iteration of the $L_1$ inverted pendulum problem.
         Hue corresponds to the path-integral heuristic $\rho$ 
         used for weighing
         particles in the backward pass and for pruning the tree (green values are smaller).
         The blue and black dashed lines are the mean of trajectory rollouts,
         following the policies computed at the end of the 1st and 6th iterations respectively.
         Control counts are based on trajectory rollouts
         of the 6th iteration policy computed by FBRRT. The hue of each rectangle indicates
         the relative frequency of each control signal in $\{-1, 0, 1\}$ for each time step.}
\label{fig:fwd_pend_l1}
\end{figure}


\section{NUMERICAL RESULTS}

We evaluated the FBRRT algorithm by applying it to a pair of nonlinear stochastic
    optimal control problems.
For both problems, we used a minimum fuel ($L_1$) running cost of $L(u) = a|u|$, $a>0$,
    $u \in [-1,1]$, where the terminal cost is a quadratic function centered at the origin.
Examples ran in Matlab 2019b on an Intel G4560 CPU with 8GB RAM.

Fig.~\ref{fig:fwd_pend_l1} illustrates our method applied to the $L_1$ inverted
    pendulum problem.
Note that even though there were no paths in the tree that continued along the 
    1st iteration's mean trajectory (blue line) from
    beginning to end, the algorithm was still able to produce a policy in regions
    where no particles were produced.
The green particles along the backward swing inform the policy in the beginning of the trajectory
    while the green particles near the origin inform it near the end, 
    despite taking different paths in the tree.

\begin{figure}
    \centering
    \subfloat[$L_1$ Double Integrator]{
        \includegraphics[width=0.48\linewidth]{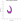}} 
    \subfloat[$L_1$ Inverted Pendulum]{
        \includegraphics[width=0.48\linewidth]{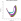}}
    \caption{Trajectory samples from policies generated after the first 6 iterations,
             the first iteration colored red, followed by yellow, green, cyan, dark blue, 
             and magenta. All terminal costs are centered at $(0,0)$.
             Dark thick lines are the mean trajectories.}
    \label{fig:traj_samples}
\end{figure}

The policies computed after the first few iterations 
    are visualized in Fig.~\ref{fig:traj_samples}.
Of significant note is that the policy obtained after only one iteration (red hue)
    does significantly well in general.
For the $L_1$ inverted pendulum problem evaluated in \cite{Exarchos2018}, 
    convergence required $55$ iterations, 
    but for our method only a handful of iterations
    were needed to get comparable performance.
We also compared the convergence speed and robustness of FBRRT 
    to parallel-sampled FBSDE \cite{Exarchos2018} by 
    randomly sampling different starting
    states and evaluating their relative performance over a number of trials.
We normalized the final costs across the initial states by dividing all costs for a particular
    initial state by the largest cost obtained across both methods.
For each iteration, we assign the value of the accumulated minimum value across previous
    iterations for that trial, i.e., the value is the current best cost after running
    that many iterations, regardless of the current cost.
We aggregated these values across initial states and trials into the box plots in
    Fig.~\ref{fig:dblintbatch}.
Since the FBRRT is significantly slower than the FBSDE per iteration due to the RRT nearest
    neighbors calculation,
    we scale each iteration by runtime.
By nearly every comparison, FBRRT converges faster and in fewer iterations than FBSDE, 
    and does so with half as many particle samples.

\begin{figure}
    \centering
    \includegraphics[width=0.48\textwidth]{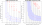}
\caption{Comparison of FBRRT and FBSDE for the $L_1$ double integrator problem for
         random initial states. Expected trajectory costs for the computed policies
         are normalized across different initial conditions.}
\label{fig:dblintbatch}
\end{figure}

\section{CONCLUSIONS AND FUTURE WORK}

In this work, we have proposed a novel generalization of the FBSDE approach to solve
    stochastic optimal control problems, combining both branched sampling techniques
    with weighted least squares function approximation to greatly expand the flexibility
    of these methods.
Leveraging the efficient space-filling properties of RRT methods, we have demonstrated
    that our method significantly improves convergence properties over previous FBSDE methods.
We have shown how the proposed method works hand in hand with a proposed
    path integral-weighted LSMC method, concentrating function approximation in
    the regions where optimal trajectories are most likely to be dense.
We have demonstrated that FBRRT can generate feedback control policies for
    nonlinear stochastic optimal control problems with non-quadratic costs.

Future work includes incorporating modern RRT algorithms, since most
    could be adapted to this approach with the proper book-keeping.
Further, with very minor additions to the forward sampling algorithm, the methods
    might be applied to problems where the system must avoid obstacles,
    though experimental verification of the approach is needed.
Another significant area of research worth investigating is to find better methods of
    value function representation.
Although 2nd-order polynomials generally produce nice policy functions, they are unlikely
    to produce a good approximation of the value function outside of a local region.
Finally, evaluation on higher dimensional problems would be useful to
    demonstrate the usefulness of this method.


\newpage

\bibliographystyle{IEEEtran}
\bibliography{library}

\end{document}